\newcommand{\HE}{Name of Handling Editor}
\newcommand{\DoS}{Month/Day/Year}
\newcommand{\DoA}{Month/Day/Year}
\newcommand{\CA}{Name of Corresponding Author}
\newcommand{\Names}{G.J. Groenewald, D.B. Janse van Rensburg, A.C.M. Ran, F. Theron and M. van Straaten}
\newcommand{\Title}{Polar decompositions of quaternion matrices in indefinite inner product spaces}
\newtheorem{remark}[theorem]{Remark}
\newtheorem{example}[theorem]{Example}
\newcommand{\I}{\mathsf{i}}
\newcommand{\J}{\mathsf{j}}
\newcommand{\K}{\mathsf{k}}
\begin{document}

\bibliographystyle{plain}

\setcounter{page}{1}

\thispagestyle{empty}

 \title{\Title\thanks{Received
 by the editors on \DoS.
 Accepted for publication on \DoA. 
 Handling Editor: \HE. Corresponding Author: \CA}}

\author{
G.J.~Groenewald\thanks{Department of Mathematics and Applied Mathematics,
Research Focus: Pure and Applied Analytics, North-West~University,
Private~Bag~X6001,
Potchefstroom~2520,
South Africa.
(gilbert.groenewald@nwu.ac.za, dawie.jansevanrensburg@nwu.ac.za, frieda.theron@nwu.ac.za, madelein.vanstraaten@nwu.ac.za). Supported by a grant from DSI-NRF Centre of Excellence in Mathematical and Statistical Sciences (CoE-MaSS).}
\and
D.B.~Janse~van~Rensburg\footnotemark[2]
\and
A.C.M.~Ran\thanks{Department of Mathematics, Faculty of Science, VU University Amsterdam, De Boelelaan
    1111, 1081 HV Amsterdam, The Netherlands
    and Research Focus: Pure and Applied Analytics, North-West~University,
Potchefstroom,
South Africa. (a.c.m.ran@vu.nl).}
\and 
F.~Theron\footnotemark[2]
\and
M.~van~Straaten\footnotemark[2]}

\markboth{\Names}{\Title}

\maketitle

\begin{abstract}
Polar decompositions of quaternion matrices with respect to a given indefinite inner product are studied. Necessary and sufficient conditions for the existence of an $H$-polar decomposition are found. In the process an equivalent to Witt's theorem on extending $H$-isometries to $H$-unitary matrices is given for quaternion matrices.
\end{abstract}

\begin{keywords}
quaternion matrices,  $H$-polar decompositions, indefinite inner product, Witt's Theorem, extending isometries, square roots of matrices.
\end{keywords}
\begin{AMS}
15B33, 47B50, 15A23.  
\end{AMS}



\section{Introduction} \label{intro-sec}
Polar decompositions of real and complex matrices with respect to a given indefinite inner product have been extensively studied
. Necessary and sufficient conditions for the existence of an $H$-polar decomposition in the real and complex case are given in \cite{BR}, whereas in \cite{BMRRR} a description of the matrices admitting this decomposition can be found. See also the literature referenced in \cite{BMRRR}. Special cases of $H$-polar decompositions are studied for example in \cite{BMRRR2,BMRRR2err,BMRRR3}, and in \cite{MRR} stability of $H$-polar decompositions is studied. The study of polar decompositions of quaternion matrices (in the standard inner product space) goes back to 1955, see the paper by Wiegmann \cite{Wiegmann}, and also Proposition~3.2.5(d) in \cite{Rodman} and \cite{Zhang}.

Perusal of these studies suggests the need of results on the extension of isometries via Witt's Theorem in the quaternion case; see \cite{BMRRR3} for the complex case. We also need results on the existence of $H$-selfadjoint square roots and in \cite{onsnr3} the general case was studied for quaternion matrices, i.e., $H$-selfadjoint $m$th roots where $m$ is any positive integer, building on results for the complex case as found in \cite{onsnr1,MRR}.

Let $H$ be an invertible Hermitian matrix with quaternion entries which defines the indefinite inner product $[\,\cdot\,,\cdot\,]$. It will be clear from the context which invertible Hermitian matrix (or indefinite inner product) is meant, when we have more than one inner product under consideration. For a given square quaternion matrix $X$ we wish to find necessary and sufficient conditions for the existence of an \textit{$H$-polar decomposition}, i.e., such that $X=UA$ where $U$ is an $H$-unitary matrix (unitary with respect to $[\,\cdot\,,\cdot\,]$) and $A$ is an $H$-selfadjoint matrix (selfadjoint with respect to $[\,\cdot\,,\cdot\,]$).

We work mostly with matrices in an indefinite inner product space which have quaternion entries. Basic theory of quaternion linear algebra can be found in various books and papers, see for example the book by Rodman, \cite{Rodman}, and \cite{Zhang,ZhangWei}. 

In Section~\ref{secPrelim} we present some preliminary results, definitions and notation which are necessary to follow this paper. The conditions for the existence of $H$-selfadjoint square roots of $H$-selfadjoint quaternion matrices make up Section~\ref{secSqRoots}. In Section~\ref{secLem1stSteps} we pave the way for the rest of the paper with an important result, which is crucial in our proof for polar decompositions. Section~\ref{secWittsThm} focuses on the theory of Witt extensions. We give the conditions which prescribe the existence of a Witt extension in the quaternion case as well as the form of such a Witt extension. These will be essential for the main results. We obtain necessary and sufficient conditions for the existence of an $H$-polar decomposition for a given quaternion matrix in Section~\ref{secMainPolarDecomp}.

\section{Preliminaries}\label{secPrelim}
Denote the skew-field of real quaternions by $\mathbb{H}$ and the set of all vectors with $n$ quaternion entries by $\mathbb{H}^n$. This set $\mathbb{H}^n$ is considered as a right vector space, therefore scalar multiplication is from the right: $v\alpha$, where $v\in\mathbb{H}^n$, $\alpha\in\mathbb{H}$. Remember that multiplication in $\mathbb{H}$ is not commutative. Let $\mathbb{H}^{m\times n}$ denote the set of all $m\times n$ matrices in $\mathbb{H}$ and consider it as a left vector space. A matrix $A\in\mathbb{H}^{m\times n}$ can be interpreted as a linear transformation from $\mathbb{H}^{n}$ to $\mathbb{H}^m$. We will use the linear transformation and the matrix representing the linear transformation interchangeably.

Every quaternion $x\in\mathbb{H}$ has the form $x=x_0+x_1\I+x_2\J+x_3\K,$ where $x_i\in\mathbb{R}$ and the elements $\I,\J,\K$ satisfy the following formulas \begin{equation*}
\I^2=\J^2=\K^2=-1,\quad\I\J=-\J\I=\K,\quad\J\K=-\K\J=\I,\quad\K\I=-\I\K=\J.
\end{equation*}
Let $\bar{x}=x_0-x_1\I-x_2\J-x_3\K$ denote the conjugate quaternion of $x$. Let $A$ be an $m\times n$ quaternion matrix.  Then we denote  the $m\times n$ matrix in which each entry is the conjugate of the corresponding entry in $A$ by $\bar{A}$. The transpose of $\bar{A}$ is the $n\times m$ matrix  denoted by $A^*$. Note that every $n\times n$ quaternion matrix $A$ can be written as $A=A_1+\J A_2$, where $A_1$ and $A_2$ are $n\times n$ complex matrices.

\begin{definition}
A nonzero vector $v\in\mathbb{H}^n$ is called a \emph{(right) eigenvector} of a matrix $A\in\mathbb{H}^{n\times n}$ corresponding to the \emph{(right) eigenvalue} $\lambda\in\mathbb{H}$ if the equality $Av= v\lambda$ holds.
\end{definition}
An $n \times n$ quaternion matrix $A$ has left eigenvalues and right eigenvalues but we only  need right eigenvalues and right eigenvectors and therefore omit the word `right'. 

The \textit{spectrum} of $A$, denoted by $\sigma(A)$, is the set of all eigenvalues of $A$ and is closed under similarity of quaternions. From \cite{Zhang} we see that an $n\times n$ matrix $A$ has exactly $n$ eigenvalues which are complex numbers with nonnegative imaginary parts and the Jordan normal form of $A$ has precisely these numbers on the diagonal. Let $\mathbb{C}_+=\{\lambda\in\mathbb{C}\mid \text{Im}\,(\lambda)>0\}$ denote the open upper complex half-plane. The Jordan normal form of a quaternion matrix is a direct sum of Jordan blocks, $J_k(\lambda)$, corresponding to eigenvalue $\lambda\in\mathbb{C}_+$ and of size $k\times k$. We also need the standard involutary permutation (sip) matrix (a $k\times k$ matrix with ones on the main anti-diagonal and zeros elsewhere), and  denote it by $Q_k$. 

The null space (or kernel) and the range (or image) of the matrix $A\in\mathbb{H}^{m\times n}$ are as follows:
\begin{equation*}
{\rm Ker}\,A=\{x\in\mathbb{H}^n \mid Ax=0\};\quad \text{Im }A=\{Ax\mid x\in\mathbb{H}^n\}.
\end{equation*}
An $n\times n$ quaternion matrix $H$ is said to be \textit{Hermitian} if $H^*=H$ and \textit{skew-Hermitian} if $H^*=-H$. The eigenvalues of a Hermitian matrix are real, see Theorem~5.3.6(c) in \cite{Rodman}. Let $\pi(H)$ be the number of positive eigenvalues of the Hermitian matrix $H$. 

We consider the indefinite inner product $[\,\cdot\,,\cdot\,]$ defined by an invertible Hermitian matrix $H\in\mathbb{H}^{n\times n}$ as follows $[x,y]=\langle Hx,y\rangle=y^*Hx$ for $x,y\in\mathbb{H}^n$, where $\langle\cdot\,,\cdot\rangle$ denotes the standard inner product. It is important to note that the following is true for all $u,v\in\mathbb{H}^n$  and $\alpha\in\mathbb{H}$: 
\begin{equation*}
[x,y]^*=[y,x];\quad [x\alpha,y]=[x,y]\alpha;\quad [x,y\alpha]=\alpha^*[x,y].
\end{equation*}

A subspace $W$ of $\mathbb{H}^n$ is said to be \textit{$H$-nondegenerate} if $x\in W$ and $[x,y]=0$ for all $y\in W$ imply that $x=0$. Otherwise $W$ is $H$-degenerate. 

Let $W^{[\perp]}$ denote the \textit{$H$-orthogonal companion} of the subspace $W$ of $\mathbb{H}^n$, i.e.,
\begin{equation*}
W^{[\perp]}:=\{x\in\mathbb{H}^n\mid [x,y]=0\;{\rm for\; all}\;y\in W\}.
\end{equation*}

In terms of the $H$-orthogonal companion we have the following result for quaternion subspaces. See Proposition~3.6.4 in \cite{Rodman}.
\begin{proposition}\label{PropMnondeg}
Let $W$ be a subspace of $\mathbb{H}^n$. Then the following are equivalent:
\begin{enumerate}
\item[\rm (i)] $W$ is $H$-nondegenerate.
\item[\rm (ii)] $W^{[\perp]}$ is $H$-nondegenerate.
\item[\rm (iii)] $W^{[\perp]}$ is a direct complement to $W$ in $\mathbb{H}^n$.
\end{enumerate}
\end{proposition}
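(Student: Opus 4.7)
The plan is to prove the three equivalences by first establishing the key dimension identity
\[
\dim W + \dim W^{[\perp]} = n,
\]
and then deducing that the degeneracy of $W$ is measured exactly by the subspace $W\cap W^{[\perp]}$. Everything else follows from these two ingredients plus a symmetry argument between $W$ and $W^{[\perp]}$.

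First I would verify the dimension formula, which is the one place where we must be careful about working over $\mathbb{H}$. Writing $[x,y] = y^*Hx$, the condition $x \in W^{[\perp]}$ says that $Hx$ is $\langle\cdot,\cdot\rangle$-orthogonal to $W$, i.e., $W^{[\perp]} = H^{-1}(W^{\perp})$, where $W^{\perp}$ denotes the standard orthogonal complement with respect to the (positive definite) inner product $\langle\cdot,\cdot\rangle$ on the right $\mathbb{H}$-module $\mathbb{H}^n$. Since $H$ is invertible, multiplication by $H^{-1}$ is a bijection preserving right-$\mathbb{H}$-linear dimension, and the standard orthogonal complement satisfies $\dim W^{\perp} = n - \dim W$; hence $\dim W^{[\perp]} = n - \dim W$.

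Next I would observe that a vector $x \in W$ witnesses the $H$-degeneracy of $W$ precisely when $[x,y] = 0$ for all $y \in W$, that is, when $x \in W \cap W^{[\perp]}$. Therefore $W$ is $H$-nondegenerate if and only if $W \cap W^{[\perp]} = \{0\}$. Combined with the dimension identity, this immediately gives (i) $\Leftrightarrow$ (iii): if $W\cap W^{[\perp]}=\{0\}$, then $W+W^{[\perp]}$ has dimension $n$ and the sum is direct, so $W^{[\perp]}$ is a direct complement to $W$; conversely, if $W^{[\perp]}$ is a direct complement, then trivially $W\cap W^{[\perp]} = \{0\}$.

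For (i) $\Leftrightarrow$ (ii), the plan is to show $(W^{[\perp]})^{[\perp]} = W$. The inclusion $W \subseteq (W^{[\perp]})^{[\perp]}$ is immediate from the definition, and applying the dimension formula twice gives
\[
\dim (W^{[\perp]})^{[\perp]} = n - \dim W^{[\perp]} = \dim W,
\]
so equality holds. Then $W^{[\perp]} \cap (W^{[\perp]})^{[\perp]} = W^{[\perp]} \cap W$, and the criterion from the previous paragraph shows that $W^{[\perp]}$ is $H$-nondegenerate if and only if $W$ is, completing the cycle. The only real potential obstacle is the initial dimension count, which could be disrupted by subtleties of right-$\mathbb{H}$-linearity; but because $\langle \cdot,\cdot\rangle$ is a genuine Hermitian inner product on $\mathbb{H}^n$ and $H$ is invertible, the standard argument transfers without change.
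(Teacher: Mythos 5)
Your proof is correct, and it is the standard argument: reduce $W^{[\perp]}$ to the ordinary orthogonal complement via $W^{[\perp]}=H^{-1}(W^{\perp})$ to get $\dim W^{[\perp]}=n-\dim W$, characterize degeneracy by $W\cap W^{[\perp]}$, and use $(W^{[\perp]})^{[\perp]}=W$ (where the inclusion $W\subseteq (W^{[\perp]})^{[\perp]}$ uses the Hermitian symmetry $[x,y]^*=[y,x]$). The paper itself gives no proof but simply cites Proposition~3.6.4 of Rodman's book, where essentially this same reasoning appears, so there is nothing to contrast.
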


\begin{definition}
Let $H_1$ and $H_2$ be the matrices associated with the indefinite inner products $[\,\cdot\,,\cdot\,]_1$ on $\mathbb{H}^n$ and $[\,\cdot\,,\cdot\,]_2$ on $\mathbb{H}^m$, respectively. Let $X:\mathbb{H}^n\rightarrow\mathbb{H}^m$ be a linear transformation. Then $X^{[*]}:\mathbb{H}^m\rightarrow\mathbb{H}^n$ defined by 
\begin{equation*}
[X^{[*]}y,x]_1=[y,Xx]_2
\end{equation*} 
for all $x\in\mathbb{H}^n$, $y\in\mathbb{H}^m$, is called the \emph{$H_1$-$H_2$-adjoint} of $X$.
\end{definition}
Note that we can also write $X^{[*]}=H_1^{-1}X^*H_2$. In the case where $n=m$ and $H=H_1=H_2$, we call $X^{[*]}=H^{-1}X^*H$ the \textit{$H$-adjoint} of $X$. A matrix $A$ is said to be \textit{$H$-selfadjoint} if $A$ coincides with its $H$-adjoint. An $n\times n$ matrix $U$ is said to be \textit{$H$-unitary} if $[Ux,Uy]=[x,y]$ for all $x,y\in\mathbb{H}^n$, or equivalently, if $U^*HU=H$. In terms of the $H$-adjoint, we can also say $U$ is $H$-unitary if $U^{[*]}U=I$.

With $V$ and $W$ subspaces of $\mathbb{H}^n$, a linear transformation (or its matrix representation) $U:V\rightarrow W$ is called an \textit{$H$-isometry} if $[Ux,Uy]=[x,y]$ for all $x,y\in V$, or equivalently, if $U^*HUx=Hx$ for all $x\in V$. 

Every pair $(A,H)$ of quaternion matrices, where $A$ is $H$-selfadjoint, has a unique canonical form and it is interesting to note that it is identical to the canonical form of complex matrices. This is also given in, for example \cite[Theorem~4.1]{Alpay},  \cite{Karow} and \cite[Theorem~10.1.1]{Rodman}.
\begin{theorem}\label{ThmcanonformH}
Let $H\in\mathbb{H}^{n\times n}$ be an invertible Hermitian matrix and $A\in\mathbb{H}^{n\times n}$ an $H$-selfadjoint matrix. Then there exists an invertible matrix $S\in\mathbb{H}^{n\times n}$ such that
\begin{eqnarray}\label{eqcanonformH1}
 S^{-1}AS&=&J_{k_1}(\lambda_1)\oplus\cdots\oplus J_{k_\alpha}(\lambda_\alpha)\nonumber\\
 &\oplus & \begin{bmatrix}
J_{k_{\alpha+1}}(\lambda_{\alpha+1}) & 0 \\ 0 & J_{k_{\alpha+1}}(\overline{\lambda}_{\alpha+1}) 
 \end{bmatrix}\oplus\cdots\oplus
\begin{bmatrix}
J_{k_{\beta}}(\lambda_{\beta}) & 0 \\ 0 & J_{k_{\beta}}(\overline{\lambda}_{\beta}) 
 \end{bmatrix},
\end{eqnarray}
where $\lambda_i\in\sigma(A)\cap\mathbb{R}$ for all $i=1,\ldots,\alpha$, $\lambda_i\in\sigma(A)\cap\mathbb{C}_+$ for all $i=\alpha+1,\ldots,\beta$, and
\begin{equation}\label{eqcanonformH2}
 S^*HS=\eta_1Q_{k_1}\oplus\cdots\oplus \eta_\alpha Q_{k_\alpha}\oplus Q_{2k_{\alpha+1}}\oplus\cdots\oplus Q_{2k_\beta} ,
\end{equation}
where $\eta_i=\pm 1$. The form $(S^{-1}AS,\,S^*HS)$ in \eqref{eqcanonformH1} and \eqref{eqcanonformH2} is uniquely determined by the pair $(A,H)$, up to a permutation of diagonal blocks.
\end{theorem}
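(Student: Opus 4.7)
The proof plan is to mimic the classical strategy used for complex $H$-selfadjoint pairs, while tracking the features peculiar to quaternions. The first step is a spectral decomposition. Using the relation $A = H^{-1}A^*H$, one sees that the generalized eigenspaces of $A$ attached to distinct eigenvalue classes in $\mathbb{R} \cup \mathbb{C}_+$ are mutually $H$-orthogonal and that each such subspace is $H$-nondegenerate. Proposition~\ref{PropMnondeg} then yields an $H$-orthogonal direct sum decomposition $\mathbb{H}^n = V_1 \oplus \cdots \oplus V_s$ in which each $V_i$ is $A$-invariant and $\sigma(A|_{V_i})$ consists of a single eigenvalue class, thereby reducing the problem to establishing the canonical form on each $V_i$ separately.

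For the local reduction on a subspace $V_i$ with real eigenvalue $\lambda$, I would pass to $N := A|_{V_i} - \lambda I$, which is nilpotent and still $H|_{V_i}$-selfadjoint. A Jordan chain $\{v, Nv, \ldots, N^{k-1}v\}$ of maximal length $k$ is then selected, with $v$ rescaled so that $[v, N^{k-1}v] = \eta \in \{+1,-1\}$ (nondegeneracy of $H|_{V_i}$ guarantees this quantity can be made nonzero). A short calculation using $H$-selfadjointness of $N$ shows that the Gram matrix of this chain relative to $[\,\cdot\,,\cdot\,]$ is exactly $\eta Q_k$, so the span is $H$-nondegenerate. Its $H$-orthogonal complement inside $V_i$ is again $A$-invariant and $H$-nondegenerate by Proposition~\ref{PropMnondeg}, and dimension induction produces the desired $(J_k(\lambda), \eta Q_k)$ blocks. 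For a subspace attached to a non-real class with representative $\lambda \in \mathbb{C}_+$, an analogous construction produces a maximal cyclic subspace whose basis splits into two halves yielding the paired block $\mathrm{diag}(J_k(\lambda), J_k(\bar\lambda))$ with Gram matrix $Q_{2k}$; here no sign $\eta$ appears because the complex representative $\lambda$ can be replaced by $\bar\lambda$ to absorb it.

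The main obstacle is keeping the algebra honest in the local reduction step: because $\mathbb{H}$ is noncommutative, the rescaling of the cyclic vector and the verification that the chain has a sip-type Gram matrix must be carried out carefully over a skew field, and the sign $\eta$ needs to be shown to be independent of the choices made. A cleaner route that sidesteps much of this is to apply the well-known complex canonical form to the associated $2n \times 2n$ complex pair obtained from the standard complex representation of quaternion matrices, and then pull back the canonical basis. Finally, uniqueness up to permutation of diagonal blocks is standard: the Jordan sizes are invariants of $A$ alone, while the signs $\eta_i$ on the real blocks are determined by signatures of $H$ restricted to suitable $A$-invariant subspaces, in direct analogy with the Gohberg--Lancaster--Rodman sign characteristic in the complex case.
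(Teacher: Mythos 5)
The paper does not prove this theorem at all: it is quoted as a known canonical form with explicit citations to Alpay--Ran--Rodman, Karow, and Theorem~10.1.1 of Rodman's book, so there is no in-paper argument to compare yours against. Your sketch follows the standard route used in those references (split $\mathbb{H}^n$ into $H$-orthogonal, $H$-nondegenerate root subspaces, then induct on dimension by peeling off a maximal Jordan chain whose span is nondegenerate), and that strategy is sound, including the genuinely quaternionic points you flag: $[N^{k-1}v,v]$ is real and transforms as $\alpha^*r\alpha=r|\alpha|^2$ under right scaling, so only its sign survives on real blocks, while for $\lambda\in\mathbb{C}_+$ the sign can be absorbed because $\lambda$ and $\bar\lambda$ are similar over $\mathbb{H}$.

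Two points in your write-up are weaker than you present them. First, the Gram matrix of a raw maximal chain $\{v,Nv,\ldots,N^{k-1}v\}$ is only anti-triangular with constant anti-diagonals; it is not ``exactly $\eta Q_k$'' after rescaling $v$ alone. You must also correct $v$ by adding suitable right multiples of $Nv,\ldots,N^{k-1}v$ to kill the entries above the main anti-diagonal, and over a skew field this correction has to be checked to be solvable (it is, but it is not a one-line rescaling). Second, the ``cleaner route'' via the embedding $\omega_n$ is not actually a shortcut: the complex canonical form of $(\omega_n(A),\omega_n(H))$ is achieved by some complex similarity $T\in\mathbb{C}^{2n\times 2n}$ that need not lie in $\Omega_{2n}$, so it does not directly pull back to a quaternionic $S$; establishing that a transformation in $\Omega_{2n}$ can be chosen is essentially as much work as the direct argument (the paper's closing remark alludes to exactly this issue in the context of \cite{onsnr3}). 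If you intend a self-contained proof, stick with the direct chain construction and fill in the correction step and the uniqueness argument for the signs $\eta_i$ explicitly.
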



\section{$H$-selfadjoint square roots}\label{secSqRoots}

Necessary and sufficient conditions for the existence of a quaternion $H$-selfadjoint  $m$th root of a quaternion $H$-selfadjoint matrix were found in \cite{onsnr3}. The conditions are stated for matrices in the subalgebra 
\begin{equation*}
\Omega_{2n}:=\left\{\begin{bmatrix}
A_1 & \bar{A}_2 \\ -A_2 & \bar{A}_1
\end{bmatrix}\mid A_1,A_2\in\mathbb{C}^{n\times n}\right\}
\end{equation*} 
of $\mathbb{C}^{2n\times 2n}$, and  $\mathbb{H}^{n\times n}$ is isomorphic to $\Omega_{2n}$ by means of the isomorphism $\omega_n$ defined by
\begin{equation}\label{eqDefOmega_n}
\omega_n(A_1+\J A_2)=\begin{bmatrix}
A_1 & \bar{A}_2 \\ -A_2 & \bar{A}_1
\end{bmatrix}, \quad\text{where}\,\,A_1,A_2\in\mathbb{C}^{n\times n}.
\end{equation}
Then for any $n\times n$ quaternion matrix $B$ we have the following:
The canonical form of $(\omega_n(B),\omega_n(H))$ is given by $(J\oplus \bar{J},Q\oplus Q)$ if and only if the canonical form of $(B,H)$ is given by $(J,Q)$. To see this, apply $\omega_n$ to both sides of \eqref{eqcanonformH1} and \eqref{eqcanonformH2}, and use the properties $\omega_n(A^*)=(\omega_n(A))^*$ and $\omega_n(A^{-1})=(\omega_n(A))^{-1}$, where in the latter case, $A$ is invertible.

Now, for a given $H$-selfadjoint matrix $B$ in $\mathbb{H}^{n\times n}$ we present the necessary and sufficient conditions for the existence of an $H$-selfadjoint square root of $B$ (i.e., $m=2$ in Theorem~3.2 stated in \cite{onsnr3}).

\begin{theorem}\label{ThmSqRootExistence}
Let $B$ and $H$ be $n\times n$ quaternion matrices. Let $H$ be invertible and Hermitian, and let $B$ be $H$-selfadjoint. Then there exists an $H$-selfadjoint quaternion matrix, say $A$, such that $A^2=B$ if and only if the canonical form of $(B,H)$ has the following properties:
\begin{enumerate}
\item[\rm (i)] The part of the canonical form corresponding to the negative eigenvalues, say $(B_-,H_-)$, is given by
\begin{equation*}
B_-=\bigoplus_{j=1}^t\left(J_{k_j}(\lambda_j)\oplus J_{k_j}(\lambda_j)\right),\quad H_-=\bigoplus_{j=1}^t\left(Q_{k_j}\oplus -Q_{k_j}\right),
\end{equation*}
where $\lambda_j<0$.
\item[\rm (ii)] The part of the canonical form corresponding to the zero eigenvalues, say $(B_0,H_0)$, is given by 
\begin{equation*}
B_0=\bigoplus_{j=1}^tB^{(j)},\quad\quad H_0=\bigoplus_{j=1}^tH^{(j)}
\end{equation*}
where each corresponding pair of matrices $(B^{(j)},H^{(j)})$ is given by either 
\begin{equation*}
B^{(j)}=J_{a_j+1}(0)\oplus J_{a_j}(0),\quad H^{(j)}=\eta_j Q_{a_j+1}\oplus \eta_jQ_{a_j},
\end{equation*}
or
\begin{equation*}
B^{(j)}=J_{a_j}(0)\oplus J_{a_j}(0),\quad H^{(j)}=Q_{a_j}\oplus -Q_{a_j},
\end{equation*}
where $\eta_j=\pm 1$ and in the former case $a_{j}$ is allowed to be zero.
\end{enumerate}
\end{theorem}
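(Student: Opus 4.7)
The proof plan is to transfer the problem to the complex matrix algebra $\mathbb{C}^{2n\times 2n}$ via the algebra isomorphism $\omega_n$ of \eqref{eqDefOmega_n}, and then invoke the known complex existence result for $H$-selfadjoint square roots (the $m=2$ case of the criterion in \cite{onsnr1}, which is also used in \cite{MRR}). Because $\omega_n$ respects squaring, inversion, and conjugate transposition, a matrix $A\in\mathbb{H}^{n\times n}$ is an $H$-selfadjoint square root of $B$ exactly when $\omega_n(A)\in\Omega_{2n}$ is an $\omega_n(H)$-selfadjoint square root of $\omega_n(B)$. Combined with the canonical-form doubling described just after \eqref{eqDefOmega_n} --- each real-eigenvalue quaternion block $(J_k(\lambda),\eta Q_k)$ becomes the pair of equal-sign complex blocks $(J_k(\lambda)\oplus J_k(\lambda),\eta Q_k\oplus \eta Q_k)$, and each complex-conjugate quaternion pair doubles in the obvious way --- the theorem reduces to checking that the quaternion conditions (i) and (ii) are precisely the translations back of the complex existence conditions on $(\omega_n(B),\omega_n(H))$.

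For \emph{necessity}, if $A^2=B$ with $A$ being $H$-selfadjoint, then $\omega_n(A)$ is an $\omega_n(H)$-selfadjoint square root of $\omega_n(B)$, so by the complex theorem the Jordan blocks of $\omega_n(B)$ at each negative real eigenvalue must pair up with opposite signs, and those at $0$ must split into pairs of one of the two admissible types. A direct inspection of the doubling map then forces the quaternion canonical form itself to contain the pairings prescribed in (i) and (ii): a quaternion block $J_k(\lambda_j)$ with $\lambda_j<0$ and sign $+1$ unmatched by a sign $-1$ partner would double to two complex blocks of the same sign, contradicting the complex negative-eigenvalue pairing, and an isolated zero-eigenvalue block $J_a(0)$ with sign $\eta$ would double to $(J_a(0),\eta Q_a)\oplus(J_a(0),\eta Q_a)$, matching neither admissible zero-eigenvalue pattern.

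For \emph{sufficiency}, since the similarity $B\mapsto S^{-1}BS$, $H\mapsto S^*HS$ preserves both $H$-selfadjointness and squaring, we may assume $(B,H)$ is already in its canonical form \eqref{eqcanonformH1}--\eqref{eqcanonformH2}. An $H$-selfadjoint square root is then assembled as a direct sum over primary components. For positive real eigenvalues and for complex-conjugate-paired eigenvalues, the principal square root of each Jordan block is a real polynomial in that block, hence automatically $H$-selfadjoint and with entries in $\mathbb{H}$. For the negative-eigenvalue pairs dictated by (i) and for the zero-eigenvalue patterns listed in (ii), one writes down the standard explicit $H$-selfadjoint square roots known from the complex setting; their entries lie in $\mathbb{R}\subset\mathbb{H}$, so they are valid quaternion matrices (equivalently, the corresponding elements of $\Omega_{2n}$ are diagonal-block-symmetric). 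Taking the direct sum produces the required $A$.

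The main obstacle is the bookkeeping in the zero-eigenvalue case (ii): one has to verify that the two quaternion pair types, and only those, double under $\omega_n$ to complex patterns meeting the complex square-root criterion, and that conversely any admissible complex pairing at $0$ on $(\omega_n(B),\omega_n(H))$ can be re-grouped so as to respect the quaternionic structure and descend to a pair of type (ii). The negative-eigenvalue case is considerably easier, since ``$+$ with $-$'' pairing is preserved under doubling, and the positive real and nonreal parts of the spectrum carry no obstructions at all.
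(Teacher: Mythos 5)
First, note that the paper does not prove this theorem at all: it is imported verbatim from \cite{onsnr3} as the $m=2$ case of the $m$th-root criterion, so there is no in-paper proof to match. Your strategy --- push everything through $\omega_n$ into $\Omega_{2n}$, invoke the complex existence criterion, and translate back --- is in fact the route taken in the cited source (the paper's closing Remark says as much), so the overall plan is sound. Your sufficiency direction is essentially fine as a sketch: after reducing to the canonical form, real Hermite interpolation (or the explicit square roots of \cite{BMRRR}) produces block-by-block $H$-selfadjoint square roots whose entries are real or complex, hence quaternionic; the only imprecision is the claim that the square root of a nonreal conjugate pair is a \emph{real} polynomial in the block, which needs the standard conjugate-symmetric interpolation argument but is harmless.

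The genuine gap is in the necessity direction at the eigenvalue $0$, and you have named it yourself without closing it. The complex criterion says that the multiset of (size, sign) data of the nilpotent blocks of $(\omega_n(B),\omega_n(H))$ admits a partition into admissible pairs; what you must show is that if the \emph{doubled} multiset admits such a partition, then the original quaternion multiset already does. Your argument only treats the case of a single isolated block $J_a(0)$ with sign $\eta$; the general obstruction involves collections of blocks (e.g.\ $(J_3(0),+)\oplus(J_2(0),-)$, or two copies of $(J_2(0),+)$) where no element is individually inadmissible but no global pairing exists, and one must prove the combinatorial descent lemma that doubling every multiplicity cannot create a pairing where none existed. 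This is precisely the nontrivial content of the quaternion theorem, so leaving it as ``bookkeeping'' leaves the proof incomplete. (For negative eigenvalues the descent is genuinely trivial, since the condition is just $n_k^+=n_k^-$ for each size $k$.) The cleanest repair is to prove necessity \emph{directly} in the quaternion setting, bypassing $\omega_n$: put $(A,H)$ into the canonical form of Theorem~\ref{ThmcanonformH} and compute the canonical form of $(A^2,H)$ block by block --- a block $(J_k(\I s)\oplus J_k(-\I s),Q_{2k})$ squares to the pattern in (i), and $J_{2a+1}(0)^2$, $J_{2a}(0)^2$ produce exactly the two patterns in (ii) with the stated signs --- which yields (i) and (ii) with no combinatorial descent at all.
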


\section{First steps towards $H$-polar decompositions}\label{secLem1stSteps}

Here we present a result for  quaternion matrices which is a modification of Lemma~4.1 stated in \cite{BR} for complex matrices. We will need this later in the proof of the conditions for an $H$-polar decomposition and it explains why we need a result on extensions of $H$-isometries to $H$-unitary matrices. To this end, a quaternion version of Witt's theorem is proved later.

\begin{lemma}\label{Lem4.1XYrelation}
Let $H_1\in\mathbb{H}^{n\times n}$ and $H_2\in\mathbb{H}^{m\times m}$ be the invertible Hermitian matrices which define indefinite inner products $[\,\cdot\,,\cdot\,]_1$ on $\mathbb{H}^n$  and $[\,\cdot\,,\cdot\,]_2$ on $\mathbb{H}^m$, respectively. Let $X$ and $Y$ be linear transformations from $\mathbb{H}^n$ to $\mathbb{H}^m$. Then $Y$ can be written in the form
\begin{equation*}\label{eqLemYX1}
Y=UX
\end{equation*}
where $U$ is an injective $H_2$-isometry from ${\rm Im}\,X$ to ${\rm Im}\,Y$, if and only if both
\begin{equation}\label{eqLemYX2}
Y^{[*]}Y=X^{[*]}X
\end{equation}
and
\begin{equation}\label{eqLemYX3}
{\rm Ker}\,X={\rm Ker}\,Y
\end{equation}
are satisfied.
\end{lemma}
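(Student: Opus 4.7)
The plan is to prove both directions directly from the definitions, using the relation $[X^{[*]}u,v]_1=[u,Xv]_2$ between adjoints and inner products throughout.

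For the forward direction, assume $Y=UX$ with $U$ an injective $H_2$-isometry from $\operatorname{Im} X$ to $\operatorname{Im} Y$. To verify \eqref{eqLemYX2}, I compute for arbitrary $a,b\in\mathbb{H}^n$,
\begin{equation*}
[Y^{[*]}Ya,b]_1=[Ya,Yb]_2=[UXa,UXb]_2=[Xa,Xb]_2=[X^{[*]}Xa,b]_1,
\end{equation*}
which gives $Y^{[*]}Y=X^{[*]}X$ since $b$ is arbitrary and $H_1$ is invertible. For \eqref{eqLemYX3}, the inclusion $\operatorname{Ker} X\subseteq\operatorname{Ker} Y$ is immediate from $Y=UX$, while $\operatorname{Ker} Y\subseteq\operatorname{Ker} X$ follows from injectivity of $U$ applied to $UXa=Ya=0$.

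For the converse, assume \eqref{eqLemYX2} and \eqref{eqLemYX3}. I will construct $U$ by the natural rule
\begin{equation*}
U(Xa):=Ya \qquad (a\in\mathbb{H}^n).
\end{equation*}
The key step is well-definedness: if $Xa=Xa'$ then $a-a'\in\operatorname{Ker} X=\operatorname{Ker} Y$, so $Ya=Ya'$. Linearity is then automatic, and $U$ maps $\operatorname{Im} X$ onto $\operatorname{Im} Y$ by construction. Injectivity follows similarly: $U(Xa)=0$ means $Ya=0$, hence $Xa=0$ by \eqref{eqLemYX3}.

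It remains to check the $H_2$-isometry property. Using \eqref{eqLemYX2}, for all $a,b\in\mathbb{H}^n$ I have
\begin{equation*}
[U(Xa),U(Xb)]_2=[Ya,Yb]_2=[Y^{[*]}Ya,b]_1=[X^{[*]}Xa,b]_1=[Xa,Xb]_2,
\end{equation*}
which is exactly $[Uv,Uw]_2=[v,w]_2$ for all $v,w\in\operatorname{Im} X$. There is no serious obstacle here; the only delicate point is the well-definedness of $U$, which is precisely what the kernel hypothesis \eqref{eqLemYX3} is designed to supply, and this illustrates why a Witt-type extension theorem will be needed later to promote such a partially defined $H_2$-isometry to a genuine $H$-unitary matrix on the whole space.
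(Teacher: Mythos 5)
Your proof is correct and follows essentially the same route as the paper: the forward direction is the same computation, and in the converse you construct the same map sending $Xa\mapsto Ya$ on ${\rm Im}\,X$, only verifying well-definedness directly from \eqref{eqLemYX3} where the paper instead fixes a basis $\{f_i\}$ of ${\rm Im}\,X$ with preimages $e_i$ and sets $U'f_i=Ye_i$. Your basis-free formulation has the minor advantage of making the identity $Y=UX$ hold by definition, a point the paper's basis argument leaves implicit.
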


Before we give the proof, here is an example as an illustration of why injectivity of $U$ is necessary in the lemma.
\begin{example}
Let $n=m=2$ and let $X$ and $Y$ be given as
\begin{equation*}
X=\begin{bmatrix}
1&0\\0&0
\end{bmatrix},\quad Y=\begin{bmatrix}
0&0\\0&0
\end{bmatrix} ,
\end{equation*} 
and let $H_1=H_2=\begin{bmatrix}
0 &1\\1&0
\end{bmatrix}$ be the matrix defining the indefinite inner product on $\mathbb{H}^2$. Then $\text{Im}\,X=\text{span}\left\{\begin{bmatrix}
1\\0
\end{bmatrix}\right\}$ and $\text{Im}\,Y=\left\{\begin{bmatrix}
0\\0
\end{bmatrix}\right\}$. Now suppose $U:\text{Im}\,X\rightarrow\text{Im}\,Y$ is a linear transformation such that $Y=UX$. For any $\alpha,\beta\in\mathbb{H}$, we have
\begin{equation*}
\left[U\begin{bmatrix}
\alpha \\0
\end{bmatrix},\,U\begin{bmatrix}
\beta\\0
\end{bmatrix}\right]_2=[0,0]_2=0
\end{equation*}
and
\begin{equation*}
\left[\begin{bmatrix}
\alpha \\0
\end{bmatrix},\,\begin{bmatrix}
\beta\\0
\end{bmatrix}\right]_2=\left\langle\begin{bmatrix}
0\\\alpha 
\end{bmatrix},\,\begin{bmatrix}
\beta\\0
\end{bmatrix}\right\rangle=0.
\end{equation*}
Therefore $U$ is an $H_2$-isometry, but note that $Ux=Uy=0$ for all $x,y\in\text{Im}\,X$ and it does not imply that $x=y$, so $U$ is not injective. Now,
\begin{equation*}
X^{[*]}X=H_1^{-1}X^*H_2X=\begin{bmatrix}
0 &1\\1&0
\end{bmatrix}\begin{bmatrix}
1&0\\0&0
\end{bmatrix}\begin{bmatrix}
0 &1\\1&0
\end{bmatrix}\begin{bmatrix}
1&0\\0&0
\end{bmatrix}=\begin{bmatrix}
0&0\\0&0
\end{bmatrix},
\end{equation*}
and $Y^{[*]}Y=\begin{bmatrix}
0&0\\0&0
\end{bmatrix}=X^{[*]}X$. However, we have 
\begin{equation*}
\text{Ker}\,Y=\mathbb{H}^2\quad\text{and}\quad\text{Ker}\,X=\text{span}\left\{\begin{bmatrix}
0\\1
\end{bmatrix}\right\},
\end{equation*}
thus the null spaces do not coincide.\qed
\end{example}

Now for the proof of Lemma~\ref{Lem4.1XYrelation}. The proof is essentially the same as in \cite{BR}, but a change to right scalar multiplication was needed.
\begin{proof}
Suppose that $Y=UX$ where $U$ is an injective $H_2$-isometry from $\text{Im}\,X$ to $\text{Im}\,Y$. Then 
\begin{equation*}
Y^{[*]}Y =X^{[*]}U^{[*]}UX=X^{[*]}X.
\end{equation*}
Since $U$ is injective, we have ${\rm Ker}\,Y={\rm Ker}\,UX={\rm Ker}\,X$. Therefore both \eqref{eqLemYX2} and \eqref{eqLemYX3} hold.

Conversely, assume that both \eqref{eqLemYX2} and \eqref{eqLemYX3} hold. We want to find an injective $H_2$-isometry between ${\rm Im}\,X$ and ${\rm Im}\,Y$. Let the dimension of ${\rm Im}\,X$ be $p$ and let $\{f_1,\ldots,f_p\}$ be a basis  of ${\rm Im}\,X\subseteq\mathbb{H}^m$. 
Now let $\{e_1,\ldots,e_p\}$ be vectors in $\mathbb{H}^n$ such that 
\begin{equation}\label{eqLemXe-f}
Xe_i=f_i,\quad i=1,2,\ldots,p,
\end{equation}
and let $g_i$ be the image of $e_i$ under $Y$, i.e.,
\begin{equation}\label{eqLemYe-g}
g_i=Ye_i, \quad i=1,2,\ldots,p.
\end{equation}
We prove that the set $\{g_1,\ldots,g_p\}$ is a basis of ${\rm Im}\,Y$. If 
\begin{equation*}
\sum_{i=1}^p g_i\gamma_i=0,
\end{equation*}
for some quaternion scalars $\gamma_i$, then it follows from \eqref{eqLemYe-g} that $\sum_{i=1}^p e_i\gamma_i\in{\rm Ker}\,Y$ and since \eqref{eqLemYX3} holds $\sum_{i=1}^p e_i\gamma_i\in{\rm Ker}\,X$. This implies that
\begin{equation*}
\sum_{i=1}^p f_i\gamma_i=X\sum_{i=1}^p e_i\gamma_i=0,
\end{equation*}
but since $\{f_1,\ldots,f_p\}$ is a basis, $\gamma_1=\gamma_2=\cdots=\gamma_p=0$. Therefore the vectors $\{g_1,\ldots,g_p\}$ are linearly independent. From \eqref{eqLemYX3} and the Rank Theorem (see Proposition~3.2.5(e) in \cite{Rodman}), we have that $\dim{\rm Im}\,Y=p$ and hence the $p$ vectors in \eqref{eqLemYe-g} form a  basis of ${\rm Im}\,Y$. Finally, define the linear map $U':{\rm Im}\,X\rightarrow{\rm Im}\,Y$ by 
\begin{equation*}
U'f_i=g_i,\quad i=1,2,\ldots,p.
\end{equation*}
Let $a=\sum_{i=1}^p f_i\alpha_i$ and $b=\sum_{i=1}^p f_i\beta_i$ be arbitrary vectors in ${\rm Im}\,X$, $\alpha_i,\beta_i\in\mathbb{H}$. Then by using \eqref{eqLemYX2}, \eqref{eqLemXe-f} and \eqref{eqLemYe-g} it can be shown that $[U'a,U'b]_2=[a,b]_2$:
\begin{eqnarray*}
[U'a,U'b]_2&=&[U'(f_1\alpha_1+\cdots+f_p\alpha_p),U'(f_1\beta_1+\cdots+f_p\beta_p)]_2\\
&=& [g_1\alpha_1+\cdots+g_p\alpha_p,g_1\beta_1+\cdots+g_p\beta_p]_2\\
&=& [Ye_1\alpha_1+\cdots+Ye_p\alpha_p,Ye_1\beta_1+\cdots+Ye_p\beta_p]_2\\
&=& [Y(e_1\alpha_1+\cdots+e_p\alpha_p),Y(e_1\beta_1+\cdots+e_p\beta_p)]_2\\
&=& [Y^{[*]}Y(e_1\alpha_1+\cdots+e_p\alpha_p),(e_1\beta_1+\cdots+e_p\beta_p)]_1\\
&=& [X^{[*]}X(e_1\alpha_1+\cdots+e_p\alpha_p),(e_1\beta_1+\cdots+e_p\beta_p)]_1\\
&=& [Xe_1\alpha_1+\cdots+Xe_p\alpha_p,Xe_1\beta_1+\cdots+Xe_p\beta_p]_2\\
&=& [a,b]_2
\end{eqnarray*}
 and thus  $U'$ is an $H_2$-isometry. Assume $U'a=0$, then $\sum_{i=1}^p g_i\alpha_i=0$ but the vectors $\{g_1,\ldots,g_p\}$ are linearly independent and therefore $a=0$. This implies that $U'$ is injective and we conclude the proof.
\end{proof}


\section{Witt's theorem}\label{secWittsThm}
Witt's theorem is known for vector spaces over the real and complex numbers and gives conditions for the extension of isometries between subspaces of $\mathbb{F}^n$ (where $\mathbb{F}$ is the field of real or complex numbers) to the whole of $\mathbb{F}^n$. Witt's theorem is an indispensable tool in the study of $H$-polar decompositions. We extend Witt's theorem to the quaternion case by supplying a proof that is essentially the same as the proof of Theorem~2.1 in \cite{BMRRR3}.

\begin{theorem}\label{ThmWittExistence}
Let $H_1,H_2\in\mathbb{H}^{n\times n}$  be invertible Hermitian matrices which define two inner products $[\,\cdot\,,\cdot\,]_1$ and $[\,\cdot\,,\cdot\,]_2$ on $\mathbb{H}^n$, respectively.
Assume that $\pi(H_1)=\pi(H_2)$. Let $U_0:V_1\rightarrow V_2$ be a nonsingular linear transformation, where $V_1$ and $V_2$ are subspaces in $\mathbb{H}^n$,  such that
\begin{equation} \label{eqWittU_0iso}
[U_0x,U_0y]_2=[x,y]_1\quad\textit{for every}\quad x,y\in V_1.
\end{equation}
Then there exists a linear transformation $U:\mathbb{H}^n\rightarrow\mathbb{H}^n$ such that
\begin{equation}\label{eqWittUiso}
[Ux,Uy]_2=[x,y]_1\quad\textit{for every}\quad x,y\in\mathbb{H}^n
\end{equation}
and
\begin{equation}\label{eqWittU_0=U}
Ux=U_0x\quad\textit{for every}\quad x\in V_1.
\end{equation}
\end{theorem}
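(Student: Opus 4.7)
The plan is to adapt the complex proof of Theorem~2.1 in \cite{BMRRR3} to the right $\mathbb{H}$-module setting, keeping careful track of the sesquilinearity rules $[x\alpha,y]_j=[x,y]_j\alpha$ and $[x,y\alpha]_j=\alpha^*[x,y]_j$. I would induct on $\dim(V_1\cap V_1^{[\perp]_1})$, with a base case in which $V_1$ is $H_1$-nondegenerate and an inductive step that strips one dimension off the radical at a time.

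In the base case, if $V_1$ is $H_1$-nondegenerate then so is $V_2:=U_0V_1$, and Proposition~\ref{PropMnondeg} gives $\mathbb{H}^n=V_j\oplus V_j^{[\perp]_j}$ with nondegenerate restricted forms for $j=1,2$. A quaternion Sylvester inertia law---obtained from Theorem~\ref{ThmcanonformH} by specializing to $A=I$, which collapses every Jordan block to $1\times 1$ and leaves only the signs $\eta_i=\pm 1$---classifies nondegenerate Hermitian forms on a right $\mathbb{H}$-module up to congruence by their positive inertia. Since $U_0$ identifies the restricted forms on $V_1$ and $V_2$, the numbers $\pi(H_1|_{V_1})$ and $\pi(H_2|_{V_2})$ coincide; combining with the hypothesis $\pi(H_1)=\pi(H_2)$ and additivity of inertia across the orthogonal decompositions forces $\pi(H_1|_{V_1^{[\perp]_1}})=\pi(H_2|_{V_2^{[\perp]_2}})$. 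Any congruence $U_1:V_1^{[\perp]_1}\to V_2^{[\perp]_2}$ now gives the desired extension $U:=U_0\oplus U_1$; both \eqref{eqWittUiso} and \eqref{eqWittU_0=U} follow from orthogonality.

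For the inductive step, suppose $R_1:=V_1\cap V_1^{[\perp]_1}\neq\{0\}$, pick $0\neq v\in R_1$, put $v':=U_0v\in R_2$, and choose a complement $W_1$ of $R_1$ in $V_1$ with $H_1|_{W_1}$ nondegenerate; set $W_2:=U_0W_1$. Because $H_1|_{W_1^{[\perp]_1}}$ is nondegenerate by Proposition~\ref{PropMnondeg}(ii) and contains the nonzero $v$, one can choose $w\in W_1^{[\perp]_1}$ with $[w,v]_1=1$; the substitution $w\mapsto w-v\cdot\tfrac{1}{2}[w,w]_1$---legal because $v\in R_1\subseteq W_1^{[\perp]_1}$ and $[w,w]_1\in\mathbb{R}$---normalizes $[w,w]_1=0$ while preserving $[w,v]_1=1$. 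Produce $w'\in W_2^{[\perp]_2}$ analogously, with an additional adjustment inside $W_2^{[\perp]_2}\cap\{v'\}^{[\perp]_2}$ chosen to enforce the radical-pairing conditions $[r,w]_1=[U_0r,w']_2$ as $r$ ranges over a basis of $R_1$. Extending $U_0$ to $\widetilde U_0:V_1+w\mathbb{H}\to V_2+w'\mathbb{H}$ by $\widetilde U_0 w:=w'$ gives an isometry via a direct sesquilinear check, and the radical of the enlarged domain has one fewer dimension since $v$ is no longer orthogonal to $w$. The inductive hypothesis applied to $\widetilde U_0$ now finishes the argument.

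The main obstacle I foresee is the radical-pairing adjustment in the inductive step: one must verify that $w'$ admits enough freedom within $W_2^{[\perp]_2}\cap\{v'\}^{[\perp]_2}$ to simultaneously match all values $[r,w]_1$ for $r$ in a basis of $R_1$. This reduces to nondegeneracy of a specific pairing between $W_2^{[\perp]_2}\cap\{v'\}^{[\perp]_2}$ and $R_2/\mathrm{span}\{v'\}$, which can be extracted from nondegeneracy of $H_2$ on $\mathbb{H}^n$ together with a dimension count. Apart from this point, the argument is either routine sesquilinear bookkeeping or a direct application of Theorem~\ref{ThmcanonformH} and Proposition~\ref{PropMnondeg}.
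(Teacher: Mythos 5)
Your proposal is correct, but it is organized quite differently from the paper's proof. The paper proves the theorem in one global step: it fixes a basis $\{e_1,\dots,e_m\}$ of $V_1$ diagonalizing the restricted form, uses nondegeneracy of $H_1$ on $\mathbb{H}^n$ to produce dual vectors $\tilde e_j$ with $[\tilde e_j,e_i]_1=\delta_{ij}$ \emph{for all $i$ simultaneously}, normalizes $[\tilde e_k,\tilde e_k]_1=0$ by exactly your substitution $\tilde e_k\mapsto \tilde e_k+e_k(-\tfrac12[\tilde e_k,\tilde e_k]_1)$, splits each hyperbolic plane via $e_k'=(e_k-\tilde e_k)\tfrac{1}{\sqrt2}$, $e_k''=(e_k+\tilde e_k)\tfrac{1}{\sqrt2}$, completes to a signed orthonormal basis of $\mathbb{H}^n$ using Proposition~\ref{PropMnondeg}, repeats on the $V_2$ side, and defines $U$ by matching bases (the hypothesis $\pi(H_1)=\pi(H_2)$ enters only to match the Gramians of the two completed bases). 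You instead induct on $\dim(V_1\cap V_1^{[\perp]_1})$, adjoining one hyperbolic partner at a time; your base case is essentially the paper's final basis-matching step, phrased as quaternion Sylvester inertia plus additivity over orthogonal direct sums, which is fine. The price of your organization is exactly the ``radical-pairing adjustment'' you flag: because you only require $w\in W_1^{[\perp]_1}$, you must afterwards correct $w'$ so that $[w',U_0r]_2=[w,r]_1$ for the remaining radical basis vectors $r$. That correction does go through (the functionals $[\,\cdot\,,U_0r]_2$ are linearly independent on $(W_2+v'\mathbb{H})^{[\perp]_2}$ because a relation would force a nontrivial element of $R_2\cap(W_2+v'\mathbb{H})=v'\mathbb{H}$), but you can avoid it entirely by choosing $w$ in $(W_1+R_1')^{[\perp]_1}$ where $R_1'$ is a complement of $v\mathbb{H}$ in $R_1$ --- i.e.\ by imposing all the $\delta_{ij}$ conditions at once, which is precisely what the paper's dual vectors do. Either way the sesquilinear bookkeeping ($[x\alpha,y]=[x,y]\alpha$, $[x,x]$ real) is handled correctly in your sketch.
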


Any linear transformation $U$ with the property \eqref{eqWittUiso} is called \textit{$H_1$-$H_2$-unitary} and any $H_1$-$H_2$-unitary linear transformation that also satisfies \eqref{eqWittU_0=U} is called a \textit{Witt extension} of $U_0$.

\begin{proof}
Let $m$ be the dimension of $V_1$ and let $\{e_1,\ldots,e_m\}$ be a basis of $V_1\subseteq\mathbb{H}^n$ such that 
\begin{equation*}
[e_i,e_j]_1=\begin{cases}
\;\;\,1 & {\rm if}\;\; i=j=m_0+1,m_0+2,\ldots,m_0+m_+, \\
-1 &{\rm if}\;\; i=j=m_0+m_++1,m_0+m_++2,\ldots,m, \\
\;\;\,0 & {\rm otherwise},
\end{cases}
\end{equation*}
where $m_0+m_++m_-=m$. 
Thus the Hermitian matrix describing the restriction of the inner product $[\,\cdot\,,\cdot\,]_1$ to $V_1$ has $m_+$ positive eigenvalues and $m_-$ negative eigenvalues and the multiplicity of zero is $m_0$. 

The strategy that we follow is to construct a basis for $\mathbb{H}^n$ by using the basis for $V_1$ and we start by forming $H_1$-nondegenerate subspaces where each contains one of the first $m_0$ basis vectors. Define a functional $\alpha_i:\mathbb{H}^n\rightarrow\mathbb{H}$ for every $i=1,2,\ldots,m$ as follows:
\begin{equation*}
\alpha_i(x)=[x,e_i]_1, \quad i=1,2,\ldots,m.
\end{equation*}
Since $\alpha_1,\ldots,\alpha_m$ are linearly independent, there exists vectors $\tilde{e}_i\in\mathbb{H}^n$ such that $\alpha_i(\tilde{e}_j)=\delta_{ij}$, where $\delta_{ij}=1$ if $i=j$ and $\delta_{ij}=0$ if $i\neq j$, i.e.\ $[\tilde{e}_j,e_i]_1=\delta_{ij}$ for all $i=1,2,\ldots,m$. Then let 
\begin{equation*}
W_k={\rm span}\,\{e_k,\tilde{e}_k\},\quad k=1,2,\ldots,m_0
\end{equation*}
and since $[e_k,e_k]_1=0$ and $[\tilde{e}_k,e_k]_1=1$, each $W_k$ is $H_1$-nondegenerate. Note that for $\beta_k=-\textstyle\frac{1}{2}[\tilde{e}_k,\tilde{e}_k]_1$, we have
\begin{eqnarray*}
[\tilde{e}_k+e_k\beta_k,\tilde{e}_k+e_k\beta_k]_1
&=&[\tilde{e}_k,\tilde{e}_k]_1+[e_k\beta_k,e_k\beta_k]_1+[\tilde{e}_k,e_k\beta_k]_1+[e_k\beta_k,\tilde{e}_k]_1\\
&=& [\tilde{e}_k,\tilde{e}_k]_1+\beta_k^*[e_k,e_k]_1\beta_k+\beta_k^*[\tilde{e}_k,e_k]_1+[e_k,\tilde{e}_k]_1\beta_k\\
&=&[\tilde{e}_k,\tilde{e}_k]_1-\frac{1}{2}[\tilde{e}_k,\tilde{e}_k]_1-\frac{1}{2}[\tilde{e}_k,\tilde{e}_k]_1=0,
\end{eqnarray*}
and 
\begin{equation*}
[e_k,\tilde{e}_k+e_k\beta_k]_1=[e_k,\tilde{e}_k]_1+\beta_k^*[e_k,e_k]_1=[e_k,\tilde{e}_k]_1.
\end{equation*}
Therefore we can always replace the vector $\tilde{e}_k$ by $\tilde{e}_k+e_k(-\frac{1}{2}[\tilde{e}_k,\tilde{e}_k]_1)$ and thus without loss of generality we can assume that $[\tilde{e}_k,\tilde{e}_k]_1=0$ for $k=1,2,\ldots,m_0$. Now, let 
\begin{equation*}
e_k'=(e_k-\tilde{e}_k)\frac{1}{\sqrt{2}}, \quad\quad e_k''=(e_k+\tilde{e}_k)\frac{1}{\sqrt{2}}.
\end{equation*}
Simple calculations analogous to those above give the following
\begin{equation*}
[e_k',e_k']_1=-1,\quad [e_k'',e_k'']_1=1\quad\text{and}\quad [e_k',e_k'']_1=0.
\end{equation*}

It follows that the subspace $W=W_1+\cdots +W_{m_0}+\text{span }\{e_j\}_{i=m_0+1,\ldots,m}$ is $H_1$-nondegenerate and thus by Proposition~\ref{PropMnondeg} the $H_1$-orthogonal companion $W^{[\perp]}$ of $W$ is  $H_1$-nondegenerate and $W^{[\perp]}$ is a direct complement to $W$ in $\mathbb{H}^n$. Therefore, we can append the vectors $e_s$ for $s=2m_0+m_++m_-+1,2m_0+m_++m_-+2,\ldots,n$ to the set
\begin{equation*}
\{e'_1,\ldots,e'_{m_0},e_1'',\ldots,e''_{m_0},e_{m_0+1},e_{m_0+2},\ldots,e_m\}
\end{equation*}
of $2m_0+m_++m_-$ vectors such that the resulting ordered set $\{g_1,\ldots,g_n\}$ will be a basis in $\mathbb{H}^n$ with the property 
\begin{equation*}
[g_i,g_j]_1=\varepsilon_i\delta_{ij},\;\text{ for } i,j=1,2,\ldots,n,\text{ where }\varepsilon_i=\pm 1.
\end{equation*}

As a last step before we can define the extension, we look at the subspace $V_2$. Let $f_i=U_0e_i$ for $i=1,2,\ldots,m$. We introduce vectors $f_k'$ and $f_k''$  (for $k=1,2,\ldots,m$) and vectors $f_s$ (for $s=2m_0+m_++m_-+1,2m_0+m_++m_-+2,\ldots,n$) in the same way we introduced the vectors $e_k'$, $e_k''$ and $e_s$ but using $[\,\cdot\,,\cdot\,]_2$ instead of $[\,\cdot\,,\cdot\,]_1$. Then this will result in a basis $\{h_1,\ldots,h_n\}$ of $\mathbb{H}^n$. The hypotheses  $\pi(H_1)=\pi(H_2)$ and \eqref{eqWittU_0iso} in the theorem statement ensure that $[h_i,h_j]_2=[g_i,g_j]_1$ for all $i,j=1,\ldots,n$.

Finally, we define the $n\times n$ matrix $U$ by the equalities
\begin{equation*}
\begin{array}{cll}
Ue_k'=f_k',   &&\text{for}\;\;k=1,\ldots,m_0, \\
Ue_k''=f_k'', &&\text{for}\;\;k=1,\ldots,m_0, \\
Ue_j=f_j ,     &&\text{for}\;\;j=m_0+1,\ldots,m, \\
Ue_s=f_s,      &&\text{for}\;\;s=2m_0+m_++m_-+1,\ldots,n.
\end{array}
\end{equation*}
From the construction above it is easy to see that the matrix $U$ satisfies both \eqref{eqWittUiso} and \eqref{eqWittU_0=U} and is therefore a Witt extension of $U_0$.
\end{proof}

We next include an extended Witt's theorem  which gives a description of any Witt extension of a given $U_0$. The proof in the quaternion case is once again essentially the same as the proof of Theorem~2.3 in \cite{BMRRR3}. 

Firstly, we mention the following: By using the same notation (and vectors) as in the proof of Theorem~\ref{ThmWittExistence}, let
\begin{eqnarray}\label{eqEbasis}
&\mathcal{E}=\{e_1,\ldots,e_{m_0},e_{m_0+1},\ldots,e_m,\tilde{e}_1,\ldots,\tilde{e}_{m_0},e_{2m_0+m_++m_-+1},\ldots,e_n\},\\
&\mathcal{F}=\{Ue_1,\ldots,Ue_m,U\tilde{e}_1,\ldots,U\tilde{e}_{m_0},Ue_{2m_0+m_++m_-+1},\ldots,Ue_n\}.\label{eqFbasis}
\end{eqnarray}
Note that the subspaces $V_1$ and $V_2$ are spanned by the first $m$ vectors of the bases of $\mathcal{E}$ and $\mathcal{F}$, respectively. The Gramian matrix of the basis $\mathcal{E}$ with respect to $[\,\cdot\,,\cdot\,]_1$  (and of the basis $\mathcal{F}$ with respect to $[\,\cdot\,,\cdot\,]_2$) is equal to
\begin{equation}\label{eqH_1GramianE+F}
\begin{bmatrix}
 0    &  0   & I_{m_0} & 0  \\
 0    & J_1  &   0   &   0  \\
I_{m_0}&  0  &   0   &   0  \\
 0    &   0  &   0   &  J_2
\end{bmatrix},
\end{equation}
where $J_1$ and $J_2$ are both diagonal matrices with $+1$ and $-1$ on its diagonal. The first $m_+$ diagonal entries of $J_1$ are $+1$ and the remaining $m_-$ diagonal entries are $-1$. The matrix $J_2$ is the Gramian matrix of the last $n-m-m_0$ vectors in \eqref{eqEbasis} and we assume without loss of generality that it has said form. 

\begin{theorem}\label{ThmWittDescription}
Let $\tilde{U}$ be a Witt extension of the $n\times n$ matrix $U_0$ as in Theorem~\ref{ThmWittExistence}. Then there exists a $J_2$-unitary matrix $P_1$ (of order $n-m-m_0$), an $(n-m-m_0)\times m_0$ matrix $P_2$, and a skew-Hermitian $m_0\times m_0$ matrix $P_3$, such that the matrix of $\tilde{U}$ has the form 
\begin{equation}\label{eqThmWittextForm}
\tilde{U}=\begin{bmatrix}
I_{m_0}&  0  & -\frac{1}{2}P_2^*J_2P_2+P_3 & -P_2^*J_2P_1 \\
 0    & I_{m-m_0} &     0           &    0   \\
 0    &   0      &   I_{m_0}      &    0    \\
 0    &   0     &     P_2       &    P_1
\end{bmatrix}
\end{equation}
Here $m=\dim V_1$ and $m_0$ is the number of zero eigenvalues of the Gramian matrix of any basis in $V_1$ with respect to $[\,\cdot\,,\cdot\,]_1$.

Conversely, if $P_1$ is an arbitrary $J_2$-unitary matrix, $P_2$ is an arbitrary $(n-m-m_0)\times m_0$ matrix, and $P_3$ is an arbitrary skew-Hermitian $m_0\times m_0$ matrix, then the matrix $\tilde{U}$ defined by \eqref{eqThmWittextForm} is a Witt extension of $U_0$.
\end{theorem}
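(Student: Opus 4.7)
The plan is to exploit that in the bases $\mathcal E$ and $\mathcal F$ constructed in the proof of Theorem~\ref{ThmWittExistence}, both inner products are represented by the \emph{same} Gramian matrix $G$ displayed in \eqref{eqH_1GramianE+F}. Consequently, if $M$ denotes the matrix of $\tilde U$ with respect to $\mathcal E$ on the source and $\mathcal F$ on the target, then the $H_1$-$H_2$-unitarity condition \eqref{eqWittUiso} is equivalent to the single block identity $M^*GM=G$, while the Witt-extension condition \eqref{eqWittU_0=U} only fixes the first $m$ columns of $M$. The theorem therefore reduces to describing all matrices $M$ with prescribed first two block columns that satisfy $M^*GM=G$.

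I would partition $M$ conformally with $G$ into blocks of sizes $m_0,\,m-m_0,\,m_0,\,n-m-m_0$. The relations $\tilde U e_j=U_0e_j$ for $j=1,\dots,m$ force the first two block columns of $M$ to agree with the corresponding columns in \eqref{eqThmWittextForm}, leaving eight unknown blocks $B_{ij}$ ($i=1,\dots,4$, $j=3,4$) to determine. Expanding $M^*GM=G$ block by block produces a short chain of equations: rows and columns indexed by the third block immediately give $B_{33}=I_{m_0}$, $B_{23}=0$ (using invertibility of $J_1$), $B_{34}=0$ and $B_{24}=0$; the $(4,4)$ block equation becomes $B_{44}^*J_2B_{44}=J_2$, so $P_1:=B_{44}$ is the required $J_2$-unitary matrix; the $(4,3)$ equation yields $B_{14}=-B_{43}^*J_2P_1$, which is the claimed $(1,4)$ entry after setting $P_2:=B_{43}$; and the $(3,3)$ block collapses to the Hermitian identity $B_{13}+B_{13}^*=-P_2^*J_2P_2$. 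Splitting $B_{13}=\tfrac12(B_{13}+B_{13}^*)+\tfrac12(B_{13}-B_{13}^*)$, valid verbatim for quaternion matrices, then yields $B_{13}=-\tfrac12 P_2^*J_2P_2+P_3$ with $P_3$ skew-Hermitian.

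For the converse I would substitute an arbitrary matrix of the form \eqref{eqThmWittextForm} back into $M^*GM$ and use the two identities $P_1^*J_2P_1=J_2$ and $P_3+P_3^*=0$ to verify each block equation directly; the explicit first two block columns of \eqref{eqThmWittextForm} immediately give $\tilde U|_{V_1}=U_0$. The only real obstacle is notational bookkeeping under the non-commutativity of $\mathbb H$: conjugate-transpose factors must be kept in the correct order throughout the block multiplications, but because $J_1$ and $J_2$ are real diagonal and the Hermitian/skew-Hermitian decomposition of a square quaternion matrix is unchanged in form, the argument parallels that of Theorem~2.3 in \cite{BMRRR3}.
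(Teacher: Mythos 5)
Your proposal is correct and follows essentially the same route as the paper: both write $\tilde U$ in the bases $\mathcal E$, $\mathcal F$ with the first $m$ columns fixed by \eqref{eqWittU_0=U}, impose unitarity with respect to the common Gramian \eqref{eqH_1GramianE+F} (your $M^*GM=G$ is the paper's $H_1^{-1}\tilde U^*H_2\tilde U=I$), and solve the resulting block equations, finishing with the Hermitian/skew-Hermitian splitting of the $(1,3)$ block. The block relations you derive coincide exactly with \eqref{eqWitt2Solutions}, so no gap remains.
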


\begin{proof}

To ensure that $\tilde{U}x=U_0x$ for all $x\in V_1$, any extension $\tilde{U}$ of $U_0$ in the bases \eqref{eqEbasis} and \eqref{eqFbasis} has the form
\begin{equation}\label{eqWittGenForm}
\tilde{U}=\begin{bmatrix}
I & 0 & A_1 & A_2 \\
0 & I & A_3 & A_4 \\
0 & 0 & A_5 & A_6 \\
0 & 0 & A_7 & A_8
\end{bmatrix},
\end{equation}
for some matrices $A_i$ with sizes the same as the corresponding blocks in \eqref{eqH_1GramianE+F}. 
The key to proving the theorem lies in the following: the matrix \eqref{eqWittGenForm} is $H_1$-$H_2$-unitary if and only if $H_1^{-1}\tilde{U}^*H_2\tilde{U}=I$.  By using \eqref{eqH_1GramianE+F} and \eqref{eqWittGenForm}, a simple computation shows that $H_1^{-1}\tilde{U}^*H_2\tilde{U}=I$ holds if and only if
\begin{equation}\label{eqWittLHS=RHS}
\begin{bmatrix}
 A_5^*  &   A_3^*J_1  & u_{13} & u_{14} \\
   0    &     I     &   A_3  &   A_4  \\
   0    &     0     &   A_5  &   A_6  \\
J_2A_6^*&J_2A_4^*J_1& u_{43} & u_{44}
\end{bmatrix}=\begin{bmatrix}
I & 0 & 0 & 0 \\
0 & I & 0 & 0 \\
0 & 0 & I & 0 \\
0 & 0 & 0 & I
\end{bmatrix},
\end{equation}
where
\begin{eqnarray*}
u_{13}&=&A_5^*A_1+A_3^*J_1A_3 + A_1^*A_5 + A_7^*J_2A_7,    \\
u_{14}&=&A_5^*A_2 + A_3^*J_1A_4 + A_1^*A_6 + A_7^*J_2A_8,  \\
u_{43}&=&J_2A_6^*A_1 + J_2A_4^*J_1A_3 + J_2A_2^*A_5 + J_2A_8^*J_2A_7, \\
u_{44}&=&J_2A_6^*A_2 + J_2A_4^*J_1A_4 + J_2A_2^*A_6 + J_2A_8^*J_2A_8.
\end{eqnarray*}
By equating the corresponding blocks in \eqref{eqWittLHS=RHS} we obtain  the following
\begin{equation}\label{eqWitt2Solutions}
A_5=I,\quad A_3=A_4=A_6=0,\quad A_2=-A_7^*J_2A_8, \quad A_8^*J_2A_8=J_2\quad \text{and}\quad A_1+A_1^*=-A_7^*J_2A_7.
\end{equation} 
When we write $A_1=\frac{1}{2}(A_1+A_1^*)+\frac{1}{2}(A_1-A_1^*)$, where the first term is Hermitian and the second is skew-Hermitian, we can use the last equality  in \eqref{eqWitt2Solutions} to find an expression for $A_1$. Then by taking $P_1=A_8$, $P_2=A_7$ and $P_3=\frac{1}{2}(A_1-A_1^*)$, the matrix \eqref{eqThmWittextForm} in the theorem statement is derived and the proof is complete.
\end{proof}


\section{$H$-polar decompositions}\label{secMainPolarDecomp}
Finally, we are ready to give necessary and sufficient conditions for the existence of an $H$-polar decomposition of a given quaternion matrix.

\begin{theorem}\label{ThmFINALPolardecomp}
Let $H$ be an invertible Hermitian matrix in $\mathbb{H}^{n\times n}$ and let $X$ be a given ${n\times n}$ quaternion matrix. Then $X$ admits an $H$-polar decomposition, say $X=UA$ for an $H$-selfadjoint $A$ and an $H$-unitary $U$, if and only if there exists an $H$-selfadjoint square root $A$ of $X^{[*]}X$  with $\textup{Ker}\,X=\textup{Ker}\,A$.
\end{theorem}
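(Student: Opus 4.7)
The plan is to prove both directions, with the forward direction being a short computation and the converse direction assembling Lemma~\ref{Lem4.1XYrelation} and Theorem~\ref{ThmWittExistence} in sequence.

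For the ``only if'' direction, suppose $X=UA$ with $A$ being $H$-selfadjoint and $U$ being $H$-unitary. I would compute $X^{[*]}X = A^{[*]}U^{[*]}UA = A^{[*]}A = A^2$, using $U^{[*]}U=I$ and $A^{[*]}=A$, which shows $A$ is an $H$-selfadjoint square root of $X^{[*]}X$. Since $U^{[*]}U=I$ forces $U$ to be invertible, the kernel condition $\operatorname{Ker}X=\operatorname{Ker}(UA)=\operatorname{Ker}A$ follows.

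For the ``if'' direction, suppose an $H$-selfadjoint $A$ with $A^2=X^{[*]}X$ and $\operatorname{Ker}A=\operatorname{Ker}X$ is given. The first step is to apply Lemma~\ref{Lem4.1XYrelation} in the case $H_1=H_2=H$ with the roles of the lemma's $X$ and $Y$ played by $A$ and $X$, respectively. Because $A^{[*]}A=A^2=X^{[*]}X$ and the two kernels coincide by assumption, the lemma produces an injective $H$-isometry $U_0:\operatorname{Im}A\to\operatorname{Im}X$ such that $X=U_0A$. Note that $U_0$ is in fact bijective onto $\operatorname{Im}X$: injectivity is given, and the rank theorem applied to the equal kernels yields $\dim\operatorname{Im}A=n-\dim\operatorname{Ker}A=n-\dim\operatorname{Ker}X=\dim\operatorname{Im}X$.

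The second step is to extend $U_0$ to an $H$-unitary matrix on all of $\mathbb{H}^n$. Setting $V_1=\operatorname{Im}A$ and $V_2=\operatorname{Im}X$, the map $U_0$ is a nonsingular linear transformation satisfying $[U_0x,U_0y]=[x,y]$ on $V_1$, and $\pi(H_1)=\pi(H_2)$ holds trivially since $H_1=H_2=H$. Therefore Theorem~\ref{ThmWittExistence} applies and supplies an $H$-unitary matrix $U\in\mathbb{H}^{n\times n}$ with $Ux=U_0x$ for every $x\in V_1=\operatorname{Im}A$. Finally, for any $v\in\mathbb{H}^n$ we have $Av\in\operatorname{Im}A$, so $UAv=U_0Av=Xv$, which gives the desired factorisation $X=UA$.

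The only subtle point, and what I would call the main obstacle, is recognising that the lemma and Witt's theorem dovetail exactly: Lemma~\ref{Lem4.1XYrelation} delivers an isometry defined only on $\operatorname{Im}A$, while the conclusion of the theorem requires an $H$-unitary on the whole space, and it is precisely Theorem~\ref{ThmWittExistence} which bridges this gap. The verification of the Witt hypotheses (in particular the equality of dimensions of $V_1$ and $V_2$ via the kernel assumption) is the one step that must be handled carefully rather than treated as routine.
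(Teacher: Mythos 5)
Your proposal is correct and follows essentially the same route as the paper: the same direct computation for the "only if" direction, then Lemma~\ref{Lem4.1XYrelation} applied to the pair $(A,X)$ followed by Theorem~\ref{ThmWittExistence} with $V_1=\operatorname{Im}A$, $V_2=\operatorname{Im}X$ for the converse. Your explicit check that $U_0$ is nonsingular (via the rank theorem and the equality of kernels) is a small detail the paper leaves implicit, but it is the same argument.
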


\begin{proof}
Assume that there exists an $n \times n$ $H$-unitary matrix $U$ and an $n\times n$ $H$-selfadjoint matrix $A$ such that $X=UA$. Then 
\begin{equation*}
X^{[*]}X=(UA)^{[*]}UA=A^{[*]}U^{[*]}UA=AA=A^2,
\end{equation*}
i.e., $A$ is an $H$-selfadjoint square root of $X^{[*]}X$. Also note that $U$ is invertible and therefore 
\begin{equation*}
\text{Ker}\,X=\text{Ker}\,UA=\text{Ker}\,A.
\end{equation*}

Conversely, suppose that there exists an $H$-selfadjoint square root $A$ of $X^{[*]}X$, i.e.\ $X^{[*]}X=A^2$, such that $\textup{Ker}\,X=\textup{Ker}\,A$. Since $A$ is  $H$-selfadjoint, we can write  $X^{[*]}X=A^{[*]}A$. Then by Lemma~\ref{Lem4.1XYrelation} there exists an injective $H$-isometry $U_0$ from $\text{Im}\,A$ to $\text{Im}\,X$ such that $X=U_0A$. Note that the conditions in Theorem~\ref{ThmWittExistence} are satisfied where $V_1=\text{Im}\,A$ and $V_2=\text{Im}\,X$ and therefore we can form a Witt extension of the $H$-isometry to the whole space $\mathbb{H}^n$. That is, there exists a matrix $U\in\mathbb{H}^{n\times n}$ such that $[Ux,Uy]=[x,y]$ for all $x,y\in\mathbb{H}^n$ and $Ux=U_0x$ for all $x\in\text{Im}\,A$. Hence, $U$ is $H$-unitary and $X=U_0A=UA$ which means that $X$ admits an $H$-polar decomposition.
\end{proof}

By using Theorem~\ref{ThmSqRootExistence} we can rewrite the criterion for the existence of an $H$-polar decomposition as follows (similarly to Theorem~4.4 in \cite{BMRRR}).
%

\begin{theorem}\label{ThmFINALPolardecomp2}
Let $H\in\mathbb{H}^{n\times n}$ be an invertible Hermitian matrix. Then a given $X\in\mathbb{H}^{n\times n}$ admits an $H$-polar decomposition if and only if all the following conditions are satisfied.
\begin{enumerate}
\item[\rm (i)] Each block in the canonical form of $(X^{[*]}X,H)$ that corresponds to a negative eigenvalue $\lambda_i$ of $X^{[*]}X$ is of the form
\begin{equation*}
\left(J_{k_i}(\lambda_i)\oplus J_{k_i}(\lambda_i),\,Q_{k_i}\oplus -Q_{k_i}\right).
\end{equation*}
\item[\rm (ii)] Each block in the canonical form of $(X^{[*]}X,H)$ that corresponds to the zero eigenvalue of $X^{[*]}X$ is either of the form
\begin{equation*}
(B_i,H_i)=\left(J_{k_i+1}(0)\oplus J_{k_i}(0),\,\eta_iQ_{k_i+1}\oplus \eta_iQ_{k_i}\right),
\end{equation*}
or of the form
\begin{equation*}
(B_i,H_i)=\left(J_{k_i}(0)\oplus J_{k_i}(0),\,Q_{k_i}\oplus -Q_{k_i}\right),
\end{equation*}
where $\eta_i=\pm 1$ and where $k_i$ is allowed to be zero in the former case. We use  $B_0$ for the $k_0\times k_0$ zero matrix, i.e. for all the single blocks $J_1(0)$, and $H_0$ for the corresponding $k_0\times k_0$ diagonal matrix with $+1$ and $-1$ on the diagonal.
\item[\rm (iii)] Let $\ell_i$ denote the order of a block $B_i$. There is a choice of basis ${\{e_{i,j}\}_{i=0}^m}_{j=1}^{\ell_i}$ in $\mathbb{H}^n$ which produces the canonical form in the second assertion and 
\begin{eqnarray}\label{eqKerXconditionnr3}
\textup{Ker}\,X&=&\textup{span}\{e_{i,1}+e_{i,k_1+1}\mid\ell_i=2k_i,i=1,\ldots,m\}\\
&\oplus&\textup{span}\{e_{i,1}\mid\ell_i=2k_i-1,i=1,\ldots,m\}\oplus\textup{span}\{e_{0,j}\}_{j=1}^{k_0}.\nonumber
\end{eqnarray}
\end{enumerate}
\end{theorem}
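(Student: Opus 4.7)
The plan is to chain the two results already established in the paper. Theorem~\ref{ThmFINALPolardecomp} reduces the existence of an $H$-polar decomposition of $X$ to the existence of an $H$-selfadjoint square root $A$ of $X^{[*]}X$ satisfying $\textup{Ker}\,A = \textup{Ker}\,X$, and Theorem~\ref{ThmSqRootExistence} characterizes precisely when the $H$-selfadjoint matrix $X^{[*]}X$ admits such a square root in terms of its canonical form. The transcription of Theorem~\ref{ThmSqRootExistence}(i)--(ii) to the setting $B=X^{[*]}X$ produces conditions (i) and (ii) verbatim: there is no restriction on blocks attached to positive real or to nonreal complex eigenvalues, while the negative and zero parts are exactly those listed. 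The remaining content of the theorem is therefore to rewrite the single kernel-matching condition $\textup{Ker}\,A = \textup{Ker}\,X$ in terms of a basis that realizes the canonical form of $(X^{[*]}X,H)$.

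I would proceed block-by-block in the canonical decomposition. On blocks corresponding to any nonzero eigenvalue, the $H$-selfadjoint square root $A$ is invertible (its eigenvalue is a nonzero square root of the eigenvalue of $B$), so these blocks contribute nothing to $\textup{Ker}\,A$. Hence $\textup{Ker}\,A$ sits entirely inside the span of the zero-eigenvalue blocks, and one only has to compute $\textup{Ker}\,A_i$ for the square root $A_i$ produced by Theorem~\ref{ThmSqRootExistence} on each block $(B_i,H_i)$ from (ii). For a block of odd total size $\ell_i=2k_i-1$ (the first form in (ii), after relabelling the larger block size as $k_i$), the construction yields an $A_i$ whose Jordan structure is a single chain $J_{\ell_i}(0)$; rewriting this single chain of $A_i$ so that $A_i^2$ has the canonical decomposition of $B_i$ pins the one-dimensional kernel onto the first canonical basis vector, giving $\textup{Ker}\,A_i=\textup{span}\{e_{i,1}\}$. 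For a block of even total size $\ell_i=2k_i$ (the second form in (ii)) the $H$-selfadjoint square root again has a single Jordan chain, but now interlacing across the two equal Jordan blocks of $B_i$; an explicit chain computation places the kernel on the diagonal direction $e_{i,1}+e_{i,k_i+1}$. The remaining $1\times1$ blocks $J_1(0)$ have zero square root, so the whole block lies in the kernel, contributing the summand $\textup{span}\{e_{0,j}\}_{j=1}^{k_0}$.

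For the forward implication one starts from a given $H$-polar decomposition $X=UA$: Theorem~\ref{ThmFINALPolardecomp} and the kernel-description just obtained force (i)--(iii) to hold for the canonical basis of $(X^{[*]}X,H)$ in which $A$ takes its square-root normal form. For the converse, assuming (i)--(iii), one builds an $H$-selfadjoint square root $A$ of $X^{[*]}X$ blockwise via Theorem~\ref{ThmSqRootExistence}, using the residual freedom within each canonical block to arrange that its kernel is exactly the subspace prescribed in (iii); the identity $\textup{Ker}\,A=\textup{Ker}\,X$ then permits application of Theorem~\ref{ThmFINALPolardecomp} (through Lemma~\ref{Lem4.1XYrelation} and the Witt extension of Theorem~\ref{ThmWittExistence}) to produce the $H$-unitary factor $U$ and hence the decomposition $X=UA$.

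The main obstacle is the bookkeeping for the even zero blocks: I must verify that the kernel vector of any $H$-selfadjoint square root on $(J_{k_i}(0)\oplus J_{k_i}(0),Q_{k_i}\oplus -Q_{k_i})$ is expressible, in a suitable canonical basis, precisely as $e_{i,1}+e_{i,k_i+1}$, and that conversely any canonical basis satisfying (iii) is compatible with some choice of square root delivered by Theorem~\ref{ThmSqRootExistence}. Once this alignment between the basis freedom in the $(B,H)$ canonical form and the freedom in the construction of $A$ is controlled, the two implications assemble into the stated equivalence.
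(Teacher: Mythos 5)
Your proposal follows essentially the same route as the paper: reduce via Theorem~\ref{ThmFINALPolardecomp} to finding an $H$-selfadjoint square root $A$ of $X^{[*]}X$ with $\textup{Ker}\,A=\textup{Ker}\,X$, obtain (i) and (ii) from Theorem~\ref{ThmSqRootExistence}, and then compute $\textup{Ker}\,A_i$ blockwise (single interlaced Jordan chain for each zero block, giving $e_{i,1}$ in the odd case and $e_{i,1}+e_{i,k_i+1}$ in the even case, with the $1\times1$ blocks contributing $\textup{span}\{e_{0,j}\}_{j=1}^{k_0}$). The ``bookkeeping'' you flag for the even zero blocks is resolved in the paper exactly as you anticipate, by the explicit similarity $A_i=S_iJ_{2k_i}(0)S_i^{-1}$ with $S_i$ built from the vectors $(e_{i,j}\pm e_{i,k_i+j})\frac{1}{\sqrt{2}}$.
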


\begin{proof}
Let $X\in\mathbb{H}^{n\times n}$ be given and suppose that assertions (i) to (iii) in the theorem statement hold. We want to prove that there exists an $H$-selfadjoint square root $A$ of $X^{[*]}X$ for which $\textup{Ker}\,X=\textup{Ker}\,A$ holds. Then by Theorem~\ref{ThmFINALPolardecomp}, the matrix $X$ admits an $H$-polar decomposition and the proof is complete. Since (i) and (ii) are satisfied, Theorem~\ref{ThmSqRootExistence} implies that $X^{[*]}X$ has an $H$-selfadjoint square root. 
The strategy that we now follow is to construct for each block $B_i$  as in (ii) of the theorem statement, an $H_i$-selfadjoint matrix $A_i$ such that $A_i^2=B_i$ and $\textup{Ker}\,A_i=\textup{Ker}\,X\cap\,\text{span}\{e_{i,j}\}_{j=1}^{\ell_i}$. Firstly, let $B_i$ be of even size, say $\ell_i=2k_i$, $k_i\geq 1$. As in the proof of Theorem~4.4 in \cite{BMRRR}, let $S_i$ be the matrix with columns 
\begin{equation*}
\textstyle (e_{i,1}+e_{i,k_i+1})\frac{1}{\sqrt{2}},\,(e_{i,1}-e_{i,k_i+1})\frac{1}{\sqrt{2}},\,(e_{i,2}+e_{i,k_i+2})\frac{1}{\sqrt{2}},\,(e_{i,2}-e_{i,k_i+2})\frac{1}{\sqrt{2}},\ldots,
(e_{i,k_i}+e_{i,2k_i})\frac{1}{\sqrt{2}},\,(e_{i,k_i}-e_{i,2k_i})\frac{1}{\sqrt{2}}.
\end{equation*}
Then $A_i=S_iJ_{2k_i}(0)S_i^{-1}$ is an $H_i$-selfadjoint square root of $B_i$ and 
\begin{equation*}
\text{Ker}\,A_i=\text{span}\{e_{i,1}+e_{i,k_i+1}\}=\text{Ker}\,X\,\cap\,\text{span}\{e_{i,j}\}_{j=1}^{\ell_i}.
\end{equation*}
Secondly, let $B_i$ be of odd size, say $\ell_i=2k_i-1$, $k_i\geq 1$. Again, as in \cite{BMRRR}, let $S_i$ be the matrix with columns
\begin{equation*}
e_{i,1},\,e_{i,k_i+1},\,e_{i,2},\,e_{i,k_i+2},\ldots,\,e_{i,k_i-1},\,e_{i,2k_i-1},\,e_{i,k_i}.
\end{equation*}
Then $A_i=S_iJ_{2k_i-1}(0)S_i^{-1}$ is an $H_i$-selfadjoint square root of $B_i$ and 
\begin{equation*}
\text{Ker}\,A_i=\text{span}\{e_{i,1}\}=\text{Ker}\,X\,\cap\,\text{span}\{e_{i,j}\}_{j=1}^{\ell_i}.
\end{equation*}
Hence if $A$ is the $H$-selfadjoint square root of $B$ consisting of a direct sum of all the $A_i$'s, then we have that $\text{Ker}\,X=\text{Ker}\,A$.

Conversely, suppose that $X\in\mathbb{H}^{n\times n}$ admits an $H$-polar decomposition, say $X=UA$, where $U\in\mathbb{H}^{n\times n}$ is $H$-unitary and $A\in\mathbb{H}^{n\times n}$ is $H$-selfadjoint. By Theorem~\ref{ThmFINALPolardecomp} the $H$-selfadjoint $A$ is a square root of the $H$-selfadjoint matrix $X^{[*]}X$ for which  $\text{Ker}\,X=\text{Ker}\,A$ holds. Since $X^{[*]}X$ has an $H$-selfadjoint square root, the canonical form of the pair $(X^{[*]}X,H)$ satisfies the conditions in Theorem~\ref{ThmSqRootExistence}. Thus assertions (i) and (ii) hold and there exists a choice of basis ${\{e_{i,j}\}_{i=0}^m}_{j=1}^{\ell_i}$ in $\mathbb{H}^n$ which provides the canonical form as given in (ii). By using this basis and a construction for $A$ as was done above, one can easily see that $\text{Ker}\,A$ is equal to the right hand side of \eqref{eqKerXconditionnr3} and since $\text{Ker}\,X=\text{Ker}\,A$, we conclude the proof.
\end{proof} 


\begin{remark}
Since there exists an isomorphism between $\mathbb{H}^{n\times n}$ and $\Omega_{2n}$, all of the results and specifically the conditions for the existence of an $H$-polar decomposition, are also true in $\Omega_{2n}$. In a previous paper, \cite{onsnr3}, proofs were given for matrices in $\Omega_{2n}$ as well as an explanation that they are also true for matrices in $\mathbb{H}^{n\times n}$ via the isomorphism $\omega_n$ as defined in \eqref{eqDefOmega_n}. Here, however, we took a direct approach and proved the results for quaternion matrices.  
\end{remark}

\bigskip
{\bf Acknowledgments.} This work is based on research supported in part by the DSI-NRF Centre of Excellence in Mathematical and Statistical Sciences (CoE-MaSS). Opinions expressed and conclusions arrived at are those of the authors and are not necessarily to be attributed to the CoE-MaSS.


\end{document}